\newcommand{\Z}{\ensuremath{\mathbb{Z}}}
\newcommand{\R}{\ensuremath{\mathbb{R}}}
\newcommand{\C}{\ensuremath{\mathbb{C}}}
\newcommand{\RP}{\ensuremath{\mathbb{R}\mathrm{P}}}
\newcommand{\p}{\ensuremath{\mathfrak{p}}}
\newcommand{\q}{\ensuremath{\mathfrak{q}}}
\newcommand{\A}{\ensuremath{\mathfrak{a}}}
\newcommand{\im}{\ensuremath{\mathrm{im}}}
\newtheorem{definition}{Definition}[section]
\newtheorem{proposition}[definition]{Proposition}
\newtheorem{lemma}[definition]{Lemma}
\newtheorem{theorem}[definition]{Theorem}
\newtheorem{corollary}[definition]{Corollary}
\begin{document}

\begin{center}

{\Large Roots of maps between spheres and projective \\ \vspace{2mm} spaces in codimension one} 

\vspace{10mm}

{\large M.\,C.\,Fenille, D.\,L.\,Gon\c calves and G.\,L.\,Prado}

\vspace{5mm}
\end{center}


{\bf Abstract:} For maps from $S^3$ and $\RP^3$ into $S^2$ and $\RP^2$, we study the problem of minimizing the root set by deforming the maps through homotopies. After presenting the classification of the homotopy classes of such maps, we prove that the minimal root set for a non null-homotopic map is either a circle or the disjoint union of two circle, according its range is $S^2$ or $\RP^2$, respectively.

\vspace{5mm}
{\bf Key words:} Hopf fibration, homotopy classes, projective spaces, minimal root set, codimension one.

\vspace{2mm}
{\bf Mathematics Subject Classification:} 55M20 $\cdot$ 55Q05.


\section{Introduction}\label{Section-Introduction}

\hspace{5mm} This article approaches, in specific settings, one of the major problems in topological root theory, namely, the question of minimizing the root set of a map by deforming it through homotopies. 

We consider maps from the $3$-sphere $S^3$ and from the $3$-dimensional projective space $\RP^3$ into the $2$-sphere $S^2$ and the projective plane $\RP^2$. Thus, we have the case of positive codimension and so a context in which the minimizing problem has been very little  explored, which includes its formulation.

One way to handle the problem is in terms of number of components and the cohomological dimension of the root or coincidence set; see \cite{Daciberg-Handbook}.

We mention some works that approach the problem in positive codimension in the most general setting of coincidences.

Article \cite{K1} concerns pairs of maps $f,g:M\to N$ between manifolds of arbitrary dimensions and the minimum number ${\rm MCC}(f,g)$ of path components of the coincidence set ${\rm Coin}(f',g')$ for maps $f'$ and $g'$ homotopic to $f$ and $g$, respectively. The problem is approached via normal bordism theory and a lower bound $N(f,g)$ for ${\rm MCC}(f,g)$ is presented, generalizing the Nielsen coincidence number (well known for maps between manifolds of the same dimension). 

In \cite{K2}, a single map $f:M\to N$ between compact smooth manifolds is considered and a normal bordism class $\omega(f)$ is defined in such a way that the possibility of annihilating the self-coincidences of $f$ is given in terms of the nullity of $\omega(f)$.

As the main goal of \cite{K3} the author computes for all pairs of maps $f_1,f_2:S^m\to\mathbb{K}\mathrm{P}(n)$, from a sphere into a real, complex or quaternionic projective space, the following invariants: (i) the strong coincidence Nielsen number, (ii) the minimal number of components ${\rm MCC}(f_1,f_2)$, and (iii) the minimal number ${\rm MC}(f_1,f_2)$ of coincidence points among all maps in the homotopy classes of the given pair.

In \cite{KP}, pairs of maps $f,g:S^{n+1}\to Y$ from the $(n+1)$-sphere into a connected smooth $n$-manifold are considered and a Wecken type theorem is proved.

We highlight that the cited articles \cite{K3} and \cite{KP} concern, among other, maps between spheres and from spheres into projective spaces, which overlap with some of the cases we consider in this note. When we see an interesting relationship between our results and those of \cite{K3} and \cite{KP}, we make specific citations and suggest comparisons.

In a context in which the involved spaces are not necessarily manifolds, the main theorem of \cite{Fenille} presents a necessary and sufficient condition for a pair of maps from a finite and connected $2$-dimensional $CW$ complex into a graph can be deformed to be coincidence free. The condition is given in terms of the existence of a lifting in a certain diagram of fundamental groups and induced homomorphisms. 

Specifically on the root problem, we highlight the article \cite{Go2} in which for a map $f:N_1^m\to N_2^n$ between compact nilmanifolds of the indicated dimensions, it is proved that the Nielsen root number $N(f)$ is non-zero if and only if the Hirsch lenght of $\ker(f_{\#})$ is $m-n$, and in this case, $N(f)$ is finite and equal to the Reidemeinster number $R(f)=[\pi_1(N_2) : f_{\#}\pi_1(N_1)]$, and the $\check{\rm C}$ech cohomology group $\check{H}^{m-n}(F;\Z)$ is not trivial for each essential Nielsen root class $F$ of $f$.

The quite delicate problem of finding the minimal number of roots for maps between surfaces has been settled for the orientable cases in the articles \cite{BGZ} and \cite{GZ}. This is a case of codimension zero but in low dimension which uses a quite different approach from the case of higher dimension.

Here we consider just a few low dimension cases, perhaps the simplest one,  but under the hypothesis of positive codimension, in fact codimension one. The questions to be addressed are of the same natura, but the study uses quite a different approach.

For studying maps from $S^3$ into $S^2$, we take advantage on the known fact that the homotopy set $[S^3,S^2]$ is in fact the third homotopy group $\pi_3(S^2)$, which is the infinite cyclic group  generated by the Hopf fibration $h:S^3\to S^2$, in such a way that a representative map $nh:S^3\to S^2$ for the homotopy class $n[h]$, corresponding to $n$ under the isomorphism $\pi_3(S^2)\approx\Z$ mapping $[h]$ to $1$, is given by the composition of $h$ with a self-map of $S^3$ of degree $n$; see \cite{Whitehead}.

For maps from $\RP^3$ into $S^2$, the first problem is to establish the homotopy classification and describe representative maps for each homotopy class in $[\RP^3,S^2]$. We solve this problem in Sections \ref{Section-Hopf-Map} and \ref{Section-Homotopy-Classes}. First, in Proposition \ref{Proposition-Factorization-Hopf-Map}, for each integer $n$, we define a map $h'_n:\RP^3\to S^2$ such that $h'_n\circ\p_3$ is homotopic to $nh$, where $\p_3:S^3\to\RP^3$ is the double covering map. Then, in Theorem \ref{Theorem-Homotopy-Class}, we prove that the function $[h'_n]\mapsto[h_n]$ is a bijection from the set $[\RP^3,S^2]$ onto the group $\pi_3(S^2)$. The main results of Sections \ref{Section-Hopf-Map} and \ref{Section-Homotopy-Classes} are in the PhD Thesis \cite{Prado-Tese}.

In Section \ref{Section-Roots} we solve the root problem for maps into $S^2$ as follows: Let $f$ be a map from either $S^3$ or $\RP^3$ into $S^2$ and let $y_0\in S^2$ be a base point, respect to which we consider roots. If $f$ is null-homotopic, then it is homotopic to a constant map at a point $y_1\neq y_0$ and so $f$ is homotopic to a root free map. Otherwise, $f$ is {\it strongly surjective} (i.e. each map homotopic to $f$ is surjective) and we prove (Theorems \ref{Theorem-Hopf-Minimum} and \ref{Theorem-HopfLine-Minimum}) that $f$ is homotopic to a map $\varphi$ such that $\varphi^{-1}(y_0)$ is a circle. Moreover, Theorem \ref{Theorem-H1neq0} states that for each map $g$ homotopic to $f$, one has ${\rm rank}(\check{H}^1(g^{-1}(y_0)))\geq1$, which means that one can not expect to have the root set $g^{-1}(y_0)$ properly contained in a circle. Therefore, it is reasonable to state that, in this situation, a minimal set for the root problem is a circle. 


These results suggest the following formulation of a {\it minimal root set} of a homotopy class, in a general context.

\begin{definition}\label{Definition-Minimal}
{\rm Given a point $y_0\in Y$ and a homotopy class $\alpha\in[X,Y]$, we say that the root set $f_0^{-1}(y_0)$, for a map $f_0\in\alpha$, is {\it minimal}, provided there is no $f\in\alpha$ such that $f^{-1}(y_0)$ is a proper subset of $f_0^{-1}(y_0)$.}
\end{definition}

Of course, Definition \ref{Definition-Minimal} may be easily reformulated for introduce the concepts of {\it minimal fixed point set} and {\it minimal coincidence set}. 



In Section \ref{Section-Maps-Into-RP2} we solve the root problem for maps from both $S^3$ and $\RP^3$ into $\RP^2$. We start (Lemma \ref{Lemma-Lifting}) by describing bijections from the sets $[S^3,S^2]$ and $[\RP^3,S^2]$ onto the sets $[S^3,\RP^2]$ and $[\RP^3,\RP^2]$, respectively. Then, we prove analogous results to those of Section \ref{Section-Roots}. Precisely, we prove (Theorem \ref{Theorem-Min-Roots-RP2}) that the minimal root set for a non null-homotopic map from either $S^3$ or $\RP^3$ into $\RP^2$ is the disjoint union of two circle. 


Throughout the text, we simplify $f$ is a continuous map by $f$ is a map, $[f]$ is the homotopy class of $f$, and the symbols $\simeq$ and $\approx$ indicate homotopy of maps and isomorphism of groups and rings, respectively. Additionally, $\p_n:S^n\to\RP^n$ is the double covering map and $\q_n:\RP^n\to S^n$ is the map which collapses the $(n-1)$-skeleton $\RP^{n-1}\subset\RP^n$ to a point.


\section{Factorization of the Hopf fibration}\label{Section-Hopf-Map}

\hspace{4mm} \hspace{4mm} Let $h:S^3\to S^2$ be the Hopf fibration. We briefly describe $h$ in order to establish the notation. Consider the sphere $S^3$ as the subset of $\C^2$ given by $$S^3=\{(z_1,z_2)\in\C^2 : |z_1|^2+|z_2|^2=1\}.$$

Consider the action $S^1\times S^3\to S^3$ of the circle over $S^3$ defined by $(\lambda,(z_1,z_2))\mapsto(\lambda z_1,\lambda z_2 )$. The orbit of each point $(z_1,z_2)\in S^3$ under this action is a circle which we denote by $S^1(z_1,z_2)$. The quotient space $S^3/S^1$ of $S^3$ by this action is the complex projective space $\C{\rm P}^1\cong S^2$. The Hopf map $h:S^3\to S^2$ is defined by mapping each point $(z_1,z_2)\in S^3$ in the class $[S^1(z_1,z_2)]\in S^3/S^1\cong S^2$ of its orbit. 

It is well known that the third homotopy group $\pi_3(S^2)\approx\Z$ under an isomorphism such that the homotopy class $[h]$ of $h$ corresponds to $1\in \mathbb{Z}$.

For each $n\in\Z$, we consider maps $nh:S^3\to S^2$ which belong to the homotopy class $n[h]$. There exist several ways to construct such maps; a map $nh$ is obtained as the composite of $h$ with any self-map of $S^3$ of degree $n$.

Henceforward, $\p_3:S^3\to\RP^3$ is the double covering map obtained by collapsing antipodal points, which
we write as $$S^3\ni(z_1,z_2) \ \mapsto \  [z_1,z_2]\in \RP^3.$$

Of course, for each map $f:\RP^3\to S^2$, the composed map $f\circ\p_3:S^3\to S^2$ is homotopic to $nh$ for some integer $n$. Proposition \ref{Proposition-Factorization-Hopf-Map} below shows that, for each integer $n$, we can get a map homotopic to $nh$ via such a composition.

We highlight that the notations and constructions presented in the proof of Proposition \ref{Proposition-Factorization-Hopf-Map} are so important as the result itself.

\begin{proposition}\label{Proposition-Factorization-Hopf-Map}
For each integer $n$, there exists a map $h'_n:\RP^3\to S^2$ such that $$nh\simeq h'_n\circ\p_3.$$
\end{proposition}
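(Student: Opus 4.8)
The plan is to realize every $nh$ as a composition that visibly factors through $\p_3$, by pushing the known $S^1$-symmetry of $h$ down to the antipodal quotient and then twisting by a quaternionic power map.

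First I would observe that $h$ itself already descends through $\p_3$. Indeed, the antipodal involution $q\mapsto -q$ on $S^3$ is exactly the action of $-1\in S^1$, and since $h$ is constant on $S^1$-orbits we have $h(-q)=h(q)$ for all $q\in S^3$. Hence $h$ factors as $h=\bar h\circ\p_3$ for a genuine (not merely up-to-homotopy) map $\bar h:\RP^3\to S^2$. This settles the case $n=1$ directly, while $n=0$ is handled by a constant map.

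Next, to produce the twist, I would identify $S^3$ with the unit quaternions, under which the antipodal map becomes $q\mapsto -q$ and the covering is $\p_3(q)=[q]=[-q]$. For each $n$ consider the power map $f_n:S^3\to S^3$, $f_n(q)=q^n$. Since $(-q)^n=(-1)^n q^n=\pm q^n$, we have $[(-q)^n]=[q^n]$, so $f_n$ descends to a well-defined map $\bar\psi_n:\RP^3\to\RP^3$, $[q]\mapsto[q^n]$, for \emph{every} integer $n$. I then set $h'_n:=\bar h\circ\bar\psi_n:\RP^3\to S^2$. Unwinding the definitions, $\bar\psi_n\circ\p_3=\p_3\circ f_n$, and therefore $h'_n\circ\p_3=\bar h\circ\bar\psi_n\circ\p_3=\bar h\circ\p_3\circ f_n=h\circ f_n$, an honest equality of maps $S^3\to S^2$.

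It remains to show $h\circ f_n\simeq nh$. Since precomposing a representative of a class in $\pi_3(S^2)$ with a self-map of $S^3$ of degree $d$ multiplies the class by $d$ (the same fact used in the Introduction to build $nh$ as $h$ composed with a degree-$n$ map), it suffices to prove $\deg(f_n)=n$. I would compute this by counting preimages of a regular value: writing a generic $q_0=\cos\phi+\sin\phi\,\hat u_0$ with $\phi\in(0,\pi)$, its $n$-th roots are precisely the $n$ quaternions obtained from the $n$-th roots of unity in the commutative plane spanned by $1$ and $\hat u_0$, so the generic fibre has exactly $n$ points. These preimages differ by right multiplication by those roots of unity, which are orientation-preserving diffeomorphisms of $S^3$, so all local degrees agree; and the local degree at $1$ is $+1$ because the derivative of $f_n$ there is multiplication by $n$ on the Lie algebra $\R^3$, of positive determinant $n^3$. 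For $n<0$ the same conclusion follows from $f_n=f_{-1}\circ f_{|n|}$, where $f_{-1}(q)=q^{-1}$ is quaternion conjugation, a reflection of degree $-1$. Hence $\deg(f_n)=n$ and $h\circ f_n\simeq nh$.

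The main obstacle I anticipate is the orientation bookkeeping in this degree computation: establishing not merely that the generic fibre has $n$ points but that every local degree is $+1$, so that the degree is $+n$ rather than $-n$. The Hopf invariant gives a clean partial check, since $H(h\circ f_n)=(\deg f_n)^2=n^2=H(nh)$, but being quadratic it cannot by itself distinguish $n$ from $-n$; the sign must be pinned down by the local orientation argument above, or equivalently by restricting $f_n$ to a single circle subgroup, where it is the orientation-preserving degree-$n$ map $\theta\mapsto n\theta$.
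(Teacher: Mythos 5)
Your construction is correct in outline but takes a genuinely different route from the paper, and the difference is worth noting. The paper also starts from the observation that $h$ is constant on $S^1$-orbits (hence descends to $h':\RP^3\to S^2$), but it then splits into parity cases: for odd $k$ it uses the map $\A_k(z_1,z_2)=(z_1^k,z_2)/|(z_1^k,z_2)|$, which descends to $\RP^3$ only when $k$ is odd (for even $k$, $\A_k(-z_1,-z_2)$ is neither $\A_k(z_1,z_2)$ nor its antipode), and for even indices it instead routes through the collapse map $\q_3:\RP^3\to S^3$, using $\deg(\q_3\circ\p_3)=2$ and setting $h'_{2n}=nh\circ\q_3$. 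Your quaternionic power map $f_n(q)=q^n$ elegantly avoids the case split, because $-1$ is central in the quaternions and so $(-q)^n=\pm q^n$ for \emph{every} $n$; this buys a single uniform formula $h'_n=\bar h\circ\bar\psi_n$ at the cost of having to compute $\deg(f_n)$, which is less immediate than $\deg(\A_k)=k$. (The paper's even-case maps $h'_{2n}$ factor through $S^3$ and yours do not, but nothing downstream in the paper depends on that particular model, only on the homotopy class.)

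The one step you should repair is the assertion that all local degrees of $f_n$ over a regular value agree because the preimages ``differ by right multiplication by roots of unity.'' Right translation $R_\zeta$ does not intertwine $f_n$ with itself away from the maximal torus: $(q\zeta)^n=q^n\zeta^n$ only when $q$ commutes with $\zeta$, so $f_n\circ R_\zeta\neq f_n$ on any neighborhood of a preimage point, and the equality of local degrees does not follow from this symmetry as stated. A clean substitute: since $S^3$ is a topological group, pointwise multiplication of self-maps induces, by the Eckmann--Hilton argument, the usual addition on $[S^3,S^3]=\pi_3(S^3)\approx\Z$; from $f_{n+1}(q)=f_n(q)\cdot q$ one gets $\deg(f_{n+1})=\deg(f_n)+1$, and with $\deg(f_0)=0$ this yields $\deg(f_n)=n$ for all $n\in\Z$ (alternatively, invoke Hopf's theorem that the $n$-th power map of a compact connected Lie group of rank $\ell$ has degree $n^{\ell}$). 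With that fact in hand, $h\circ f_n\simeq nh$ follows exactly as you say, and your proof is complete.
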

\begin{proof}
Of course, the map $h':\RP^3\to S^2$ given by $h'([z_1,z_2])=[S^1(z_1,z_2)]$ is well defined, continuous and satisfies $h'\circ\p_3=h$. This proves the result for $n=1$. For $n=0$, we take $h'_0$ to be the constant map at $[S^1(1,0)]$. For the remaining proof, we consider separately the cases  $n$ odd and $n$ even.

We consider first the odd indexes. Given an odd integer $k$, let us consider the maps $\A_k:S^3\to S^3$ and $\A_k':\RP^3\to\RP^3$ given by $$\A_k(z_1,z_2)=(z_1^k,z_2)/|(z_1^k,z_2)| \quad{\rm and}\quad \A'_k([z_1,z_2])=[\A_k(z_1,z_2)].$$

The map $\A_k$ has degree $k$ and so $h\circ\A_k\simeq kh$. Moreover, $\p_3\circ \A_k=\A'_k\circ\p_3$.

Define $h'_k:\RP^3\to S^2$ by setting $h'_k=h'\circ \A'_k$. Then $$h'_k\circ\p_3=h'\circ\A'_k\circ \p_3=h'\circ\p_3\circ\A_k=h\circ\A_k\simeq kh.$$

The proof of the odd case is complete.

For the even indexes, we start by considering the quotient map $\q_3:\RP^3\to S^3$ defined by collapsing the $2$-skeleton $\RP^2\subset\RP^3$ to a point $s_0$. Then $\deg(\q_3)=1$ (for an appropriated choice of the orientations) and so the homotopy class $[q_3]\in[\RP^3,S^3]$ corresponds, under the bijection given by the Hopf-Whitney Classification Theorem \cite[Corollary 6.19, p.\,244]{Whitehead}, to a generator of $H^3(\RP^3;\Z)\approx\Z$. It follows that the map $\q_3\circ\p_3:S^3\to S^3$ has degree $2$ and so $h\circ\q_3\circ\p_3\simeq2h$. Thus, the map $h'_2=h\circ\q_3:\RP^3\to S^2$ satisfies the condition $h'_2\circ\p_3\simeq 2h$.

Finally, for each $n\in\Z$, define $h'_{2n}:\RP^3\to S^2$ by setting $h'_{2n}=nh\circ\q_3$. Then, $$h'_{2n}\circ\p_3=nh\circ\q_3\circ\p_3\simeq 2nh.$$

The proposition is proved.
\end{proof}

The constructions made in the proof of Proposition \ref{Proposition-Factorization-Hopf-Map} are illustrated in the following diagrams: the one on the left is for the odd indexes and the one on the right is for the even indexes. $$\xymatrix{
    S^3 \ar[rr]^{\A_k} \ar[dr]_{kh} \ar[dd]_{\p_3} & & S^3 \ar[dd]^{\p_3} \ar[dl]^{h} & & S^3 \ar[dd]_-{\p_3} \ar[dr]_-{2nh} \ar@/^0.4cm/[drr]^-{\q_3\circ\p_3} & & 
    \\  & S^2 & & & & S^2 & S^3 \ar[l]_-{nh} 
    \\ \RP^3 \ar[rr]^{\A'_k} \ar[ur]^{h'_k} & & \RP^3 \ar[ul]_{h'} & & \RP^3 \ar[ur]^-{h'_{2n}} \ar@/_0.4cm/[urr]_-{\q_3} & & }$$

In the next sections we use the constructions and notations established here.


\section{Homotopy classes of maps for $\RP^3$ into $S^2$}\label{Section-Homotopy-Classes}

\hspace{4mm} As before, let $\p_3:S^3\to\RP^3$ be the double covering map. Consider the function $$\hat{\p}_3:[\RP^3,S^2]\to\pi_3(S^2) \quad{\rm given \ by}\quad \hat{\p}_3([f])=[f\circ\p_3].$$

The following theorem is the main result of this section.

\begin{theorem}\label{Theorem-Homotopy-Class}
$[\RP^3,S^2]=\{[h'_n] : n\in\Z\}$ and the function $\hat{\p}_3$ is a bijection.
\end{theorem}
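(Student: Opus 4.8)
The plan is to combine the surjectivity of $\hat{\p}_3$, which is already available, with an injectivity argument based on the coaction of $\pi_3(S^2)$ on the top cell of $\RP^3$. From Proposition \ref{Proposition-Factorization-Hopf-Map} we have $\hat{\p}_3([h'_n])=[h'_n\circ\p_3]=[nh]=n$ under $\pi_3(S^2)\approx\Z$; hence $\hat{\p}_3$ is surjective and the assignment $n\mapsto[h'_n]$ is injective. Thus it suffices to prove that $\hat{\p}_3$ is injective, for then $\hat{\p}_3$ is a bijection and, since it already carries each $[h'_n]$ to the distinct value $n$, the classes $[h'_n]$ must exhaust $[\RP^3,S^2]$, giving $[\RP^3,S^2]=\{[h'_n]:n\in\Z\}$.

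To organize the injectivity argument I would use the $CW$ pair $(\RP^3,\RP^2)$, whose single $3$-cell yields the pinch map $c:\RP^3\to\RP^3\vee S^3$ and hence an action of the group $\pi_3(S^2)=[S^3,S^2]$ on the set $[\RP^3,S^2]$, given by $\alpha\cdot[\beta]=[(\beta\vee\alpha)\circ c]$. First I would record the standard obstruction-theoretic fact that the orbits of this action are exactly the fibres of the restriction map $i^*:[\RP^3,S^2]\to[\RP^2,S^2]$ induced by the inclusion $i:\RP^2\hookrightarrow\RP^3$: two maps agree on the $2$-skeleton up to homotopy precisely when they differ by a modification of the top cell, i.e.\ by an element of $H^3(\RP^3,\RP^2;\pi_3(S^2))\approx\pi_3(S^2)$. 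Since $\RP^2$ is a $2$-complex, the Hopf--Whitney theorem \cite[Corollary 6.19]{Whitehead} gives $[\RP^2,S^2]\approx H^2(\RP^2;\Z)\approx\Z/2$, so the action has at most two orbits.

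The crux is the compatibility of $\hat{\p}_3$ with this action, namely the formula
$$\hat{\p}_3(\alpha\cdot[\beta])=2\alpha+\hat{\p}_3([\beta]),\qquad \alpha\in\pi_3(S^2),\ [\beta]\in[\RP^3,S^2].$$
To prove it I would analyse $c\circ\p_3:S^3\to\RP^3\vee S^3$ in $\pi_3(\RP^3\vee S^3)$. Composing with the retractions $r_1,r_2$ onto the wedge summands gives $r_1\circ c\circ\p_3=\p_3$ and $r_2\circ c\circ\p_3=\q_3\circ\p_3$, the latter of degree $2$ by the computation in the proof of Proposition \ref{Proposition-Factorization-Hopf-Map}. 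Because the inclusion $\RP^3\vee S^3\hookrightarrow\RP^3\times S^3$ has $3$-connected cofibre $S^3\wedge\RP^3=\Sigma^3\RP^3$, the retractions identify $\pi_3(\RP^3\vee S^3)$ with $\pi_3(\RP^3)\times\pi_3(S^3)$, no mixed Whitehead-product terms surviving in this degree; hence $c\circ\p_3$ is determined by its two components as $(j_1)_*[\p_3]+(j_2)_*(2\iota_3)$, where $j_1,j_2$ denote the inclusions of the summands. Applying $(\beta\vee\alpha)_*$, a homomorphism on $\pi_3$ restricting to $\beta_*$ and $\alpha_*$ on the summands, yields $\hat{\p}_3(\alpha\cdot[\beta])=\beta_*[\p_3]+2\,\alpha_*(\iota_3)=\hat{\p}_3([\beta])+2\alpha$, as claimed. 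I expect this identification of $c\circ\p_3$ — in particular controlling the mixed terms in $\pi_3$ of the wedge and pinning down the factor $2$ — to be the main obstacle.

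Granting the formula, injectivity follows quickly. On a single orbit the value of $\hat{\p}_3$ determines $\alpha$ (since $2\alpha$ does, there being no $2$-torsion in $\Z$) and hence the element, so $\hat{\p}_3$ is injective on each orbit and its image there is one coset of $2\Z$. The orbit of the constant map is carried onto $2\Z$, while surjectivity of $\hat{\p}_3$ forces some value to be odd, so a second orbit exists and is carried onto $1+2\Z$; as there are at most two orbits, these two account for all of $[\RP^3,S^2]$ and $\hat{\p}_3$ maps them bijectively onto the even and the odd integers, respectively. Therefore $\hat{\p}_3$ is a bijection, and the concluding remark of the first paragraph gives $[\RP^3,S^2]=\{[h'_n]:n\in\Z\}$.
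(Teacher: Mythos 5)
Your overall strategy is sound and, at its core, the same as the paper's: both arguments hinge on the action of $\pi_3(S^2)$ on $[\RP^3,S^2]$ coming from the pinch map on the top cell, on the fact that its orbits are the fibres of restriction to $\RP^2$, and on the computation that acting by $\alpha$ shifts $\hat{\p}_3$ by $2\alpha$ (the paper's version of this is the unproved assertion $[\gamma_n\circ\p_3]=(2n+1)h$). The difference is organizational: the paper runs the Barratt--Puppe sequence of $\p_2$, computes $[\RP^2,S^2]$ and $[\Sigma\RP^2,S^2]\approx\Z_2$ to get exactness, and identifies the two fibres of $\hat{\ell}_2$ separately, whereas you deduce injectivity of $\hat{\p}_3$ directly from the displayed formula; your endgame (injectivity on each orbit because $\Z$ has no $2$-torsion, two orbits mapped onto $2\Z$ and $1+2\Z$) is correct and avoids the computation of $[\Sigma\RP^2,S^2]$ altogether.

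There is, however, a genuine error in your justification of the key formula. The claim that the retractions identify $\pi_3(\RP^3\vee S^3)$ with $\pi_3(\RP^3)\times\pi_3(S^3)$ is false: since $\RP^3$ is only $0$-connected, the pair $(\RP^3\times S^3,\RP^3\vee S^3)$ is $3$-connected but not $4$-connected, so the comparison map on $\pi_3$ is onto but not injective. Concretely, the universal cover of $\RP^3\vee S^3$ is $S^3\vee S^3\vee S^3$ (the cover $S^3$ of $\RP^3$ with a copy of $S^3$ wedged on at each of the two preimages of the basepoint), so $\pi_3(\RP^3\vee S^3)\approx\Z^3$ and the map to $\pi_3(\RP^3)\oplus\pi_3(S^3)\approx\Z^2$ is $(a,b,c)\mapsto(a,b+c)$, with infinite cyclic kernel generated by the difference of the two lifts of $\iota_3$ (the ``mixed'' $\pi_1$-action term you dismissed). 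In fact the lift of $c\circ\p_3$ has class $(1,1,1)$, while $(j_1)_*[\p_3]+(j_2)_*(2\iota_3)$ has class $(1,2,0)$, so your asserted identity fails already in $\pi_3(\RP^3\vee S^3)$. The formula you want is nevertheless true and the argument is repairable: after applying $(\beta\vee\alpha)_*$ the discrepancy dies, because the target $S^2$ is simply connected and the $\pi_1$-action on $\pi_3(S^2)$ is trivial; equivalently, both disks of $\p_3^{-1}(D)$ are sent to $\alpha$ (the antipodal map of $S^3$ preserves orientation), so $(\beta\vee\alpha)_*(1,1,1)=\hat{\p}_3([\beta])+\alpha+\alpha=\hat{\p}_3([\beta])+2\alpha$. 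You should argue via this lift (or via triviality of the $\pi_1$-action in the target) rather than through the false splitting of $\pi_3(\RP^3\vee S^3)$.
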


We remark that, once proved the equality $[\RP^3,S^2]=\{[h'_n] : n\in\Z\}$, the claim that $\hat{\p}_3$ is a bijection follows from Proposition \ref{Proposition-Factorization-Hopf-Map}, which states that $\hat{\p}_3([h_n'])=nh$.

Before proving the equality in Theorem \ref{Theorem-Homotopy-Class}, we present two lemmas.

We start by considering the double covering map $\p_2:S^2\to{\RP}^2$ and the quotient map $\q_2:\RP^2\to S^2$ which collapses the $1$-skeleton $S^1\subset\RP^2$ to a point.

Consider the Barratt-Puppe sequence associated to the map $\p_2$, namely,
\begin{equation}\label{Equation-Puppe-Sequence}
    S^2\stackrel{\p_2}{\longrightarrow}\RP^2\stackrel{\ell_2}{\longrightarrow}\RP^3\stackrel{\q_3}{\longrightarrow}S^3\stackrel{\p_2^1}{\longrightarrow}\Sigma\RP^2\stackrel{\ell_2^1}{\longrightarrow}\Sigma\RP^3\to
\end{equation}

Here, the map $\ell_2$ is the natural inclusion and we are considering the natural homeomorphisms $\RP^3\!/\RP^2\cong S^3$ and $\Sigma S^2\cong S^3$. Further, $\p_2^1$ and $\ell_2^1$ denote the suspensions of $\p_2$ and $\ell_2$, respectively. For details about properties of the Barratt-Puppe sequence, see \cite[Chapter 2]{SW}.

Consider the corresponding exact sequence of sets of homotopy classes, namely,
\begin{equation}\label{Equation-Exact-Sequence-From-Puppe}
\to\left[\Sigma\RP^2,S^2\right]\stackrel{\hat{\p}_2^1}{\longrightarrow}[S^3,S^2]\stackrel{\hat{\q}_3}{\longrightarrow}[\RP^3,S^2]\stackrel{\hat{\ell}_2}{\longrightarrow}[\RP^2,S^2]\stackrel{\hat{\p}_2}{\longrightarrow}[S^2,S^2].
\end{equation}

We observe that, in this sequence, just the second and third sets from the right do not have a natural structure of group. However, there exists a natural action of $\pi_3(S^2)$ on $[\RP^3,S^2]$ that we use in the proof of Theorem \ref{Theorem-Homotopy-Class}.

For $k=2,3$ we have $[S^k,S^2]=\pi_k(S^2)\approx\Z$. In the next two lemmas we describe the set $[\RP^2,S^2]$, the group $[\Sigma\RP^2,S^2]$ and the functions $\hat{\p}_2$ and $\hat{\p}_2^{1}$. In fact, we prove that both sets have cardinality two (so the second one is isomorphic to $\Z_2$) and both functions are {\it constant}, more precisely, the image of each of them is just the corresponding homotopy class of a constant map.

\begin{lemma}\label{Lemma-p2}
The set $[\RP^2,S^2]=\{0,[\q_2]\}$ and $\hat{\p}_2$ is the null function.
\end{lemma}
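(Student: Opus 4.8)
The plan is to invoke the Hopf--Whitney classification theorem (the same \cite[Corollary 6.19, p.\,244]{Whitehead} used above), which applies here because $S^2$ is $1$-connected and both $\RP^2$ and $S^2$ are $CW$ complexes of dimension $\le 2$. It provides natural bijections
\[
[\RP^2,S^2]\xrightarrow{\ \approx\ }H^2(\RP^2;\Z),\qquad [S^2,S^2]\xrightarrow{\ \approx\ }H^2(S^2;\Z),
\]
each sending a class $[f]$ to $f^*(\iota_2)$, where $\iota_2\in H^2(S^2;\Z)$ is the fundamental class. Since $H^2(\RP^2;\Z)\approx\Z_2$ and $H^2(S^2;\Z)\approx\Z$, the first bijection already forces $[\RP^2,S^2]$ to have exactly two elements, one of which is the null-homotopic class $0$.

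To identify the nontrivial class with $[\q_2]$, I would compute $\q_2^*$ on top cohomology. As $\q_2\colon\RP^2\to\RP^2/\RP^1\cong S^2$ collapses the $1$-skeleton, it carries the $2$-cell of $\RP^2$ homeomorphically onto the top cell of $S^2$; hence at the level of cellular cochains $\q_2^*\colon C^2(S^2;\Z)\to C^2(\RP^2;\Z)$ is an isomorphism, and it sends $\iota_2$ to the generator of $H^2(\RP^2;\Z)\approx\Z_2$. Thus $\q_2^*(\iota_2)\ne 0$, so $[\q_2]$ is precisely the nontrivial element and $[\RP^2,S^2]=\{0,[\q_2]\}$.

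For $\hat{\p}_2$ I would exploit naturality of the Hopf--Whitney bijection: under the two identifications above, $\hat{\p}_2([g])=[g\circ\p_2]$ corresponds to $\p_2^*\colon H^2(\RP^2;\Z)\to H^2(S^2;\Z)$, a group homomorphism from $\Z_2$ into the torsion-free group $\Z$. Every such homomorphism is trivial, so $\p_2^*=0$ and $\hat{\p}_2$ sends every class to $0$; that is, $\hat{\p}_2$ is the null function. Equivalently, the degree of $g\circ\p_2$ is the integer $\p_2^*g^*(\iota_2)$, which lies in the image of the torsion group $H^2(\RP^2;\Z)$ and hence vanishes.

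The main obstacle is the bookkeeping behind identifying $[\q_2]$ with the generator, namely verifying that $\q_2^*$ is nonzero on integral $H^2$. The cleanest alternative, avoiding the classification theorem, is to note that $S^1\hookrightarrow\RP^2$ is a cofibration and, since $\pi_1(S^2)=0$, any $g\colon\RP^2\to S^2$ may be homotoped to be constant on $S^1$, whence $g\simeq\bar g\circ\q_2$ for some $\bar g\colon S^2\to S^2$; the parity of $\deg\bar g$ then determines which of the two classes contains $[g]$, and the same factorization makes the triviality of $\hat{\p}_2$ transparent.
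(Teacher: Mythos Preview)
Your proof is correct and follows essentially the same route as the paper: both invoke the Hopf--Whitney classification to identify $[\RP^2,S^2]$ with $H^2(\RP^2;\Z)\approx\Z_2$, verify that $\q_2^{\ast}$ is nonzero on $H^2$ to pin down the nontrivial class, and deduce the nullity of $\hat{\p}_2$ from the fact that $\p_2^{\ast}$ factors through torsion. Your torsion argument for $\p_2^{\ast}\colon\Z_2\to\Z$ is slightly more direct than the paper's phrasing via the composite $\p_2^{\ast}\circ\q_2^{\ast}$, and your closing cofibration remark is an extra alternative the paper does not mention, but the core argument is the same.
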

\begin{proof}
By the Hopf-Whitney Classification Theorem, one has an isomophism $[S^2,S^2]\approx H^2(S^2;\Z)\approx\Z$ (corresponding to the {\it degree function}) and a bijection of sets $$[{\RP}^2,S^2]\equiv H^2({\RP}^2;\Z)\approx\Z_2.$$ 

It is easy to see that $\q_2^{\ast}:H^2(S^2;\Z)\to H^2(\RP^2;\Z)$ corresponds to the quotient homomorphism $\Z\to\Z_2$, which implies that $\q_2$ is not null-homotopic. It follows that $[{\RP}^2,S^2]=\{0,[\q_2]\}$. Since $\p_2^{\ast}\circ\q_2^{\ast}:H^2(S^2;\Z)\to H^2(S^2;\Z)$ is the trivial homomorphism, we have $\hat{\p}_2([\q_2])=[\q_2\circ\p_2]=0$. Therefore, $\hat{\p}_2$ is null.
\end{proof}

\begin{lemma}\label{Lemma-p21}
The group $[\Sigma\RP^2,S^2]\approx\Z_2$ and the homomorphism $\hat{\p}_2^{1}$ is null.
\end{lemma}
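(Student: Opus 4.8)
The plan is to treat the two assertions separately and to observe that, once the group $[\Sigma\RP^2,S^2]$ has been identified, the vanishing of $\hat{\p}_2^1$ is essentially automatic. The group I would compute through a cofiber (Barratt--Puppe) sequence, but not the one in \eqref{Equation-Puppe-Sequence}: instead I would start from the standard cell structure of $\RP^2$, namely $S^1$ with a $2$-cell attached by a degree-$2$ map, whose associated cofiber sequence is $S^1\xrightarrow{\cdot 2}S^1\to\RP^2\to S^2\xrightarrow{\cdot 2}S^2\to\cdots$, the first arrow being the degree-$2$ attaching map of the top cell. Suspending once, and using $\Sigma S^k\cong S^{k+1}$ together with the fact that suspension preserves degree, yields $$S^2\xrightarrow{\cdot 2}S^2\xrightarrow{j}\Sigma\RP^2\xrightarrow{k}S^3\xrightarrow{\cdot 2}S^3\to\cdots,$$ which exhibits $\Sigma\RP^2$ as $S^2$ with a $3$-cell attached by a degree-$2$ map; note that all the spaces here are suspensions, so the relevant homotopy sets are groups and the induced maps are homomorphisms.

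Next I would apply the contravariant functor $[-,S^2]$ to obtain the exact sequence $$[S^3,S^2]\xrightarrow{(\cdot 2)^\ast}[S^3,S^2]\xrightarrow{k^\ast}[\Sigma\RP^2,S^2]\xrightarrow{j^\ast}[S^2,S^2]\xrightarrow{(\cdot 2)^\ast}[S^2,S^2],$$ and identify $[S^3,S^2]=\pi_3(S^2)\approx\Z$ and $[S^2,S^2]=\pi_2(S^2)\approx\Z$. The crucial input is that precomposition with a degree-$2$ self-map of a sphere acts as multiplication by $2$ on the corresponding homotopy group: a degree-$2$ map factors up to homotopy as pinch followed by fold, so $\alpha\circ(\cdot 2)\simeq\alpha+\alpha$ for any $\alpha$. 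Hence both copies of $(\cdot 2)^\ast$ are multiplication by $2$ on $\Z$, injective with image $2\Z$. Running exactness then gives $\im(j^\ast)=\ker((\cdot 2)^\ast)=0$, so $j^\ast=0$; consequently $k^\ast$ is surjective with $\ker(k^\ast)=\im((\cdot 2)^\ast)=2\Z$, whence $[\Sigma\RP^2,S^2]\approx\Z/2\Z\approx\Z_2$.

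Finally, for the nullity of $\hat{\p}_2^1\colon[\Sigma\RP^2,S^2]\to[S^3,S^2]$, I would simply note that it is a homomorphism from the torsion group $\Z_2$ into the torsion-free group $\pi_3(S^2)\approx\Z$, and is therefore identically zero; that $\hat{\p}_2^1=(\Sigma\p_2)^\ast$ is a genuine homomorphism is guaranteed because $\p_2^1$ is a suspension, hence a co-$H$-map. I expect the only real obstacle to lie in the bookkeeping of the first two paragraphs, i.e.\ in setting up the suspended cofiber sequence and justifying that both $(\cdot 2)^\ast$ maps are multiplication by $2$; the concluding torsion argument is immediate once $[\Sigma\RP^2,S^2]\approx\Z_2$ is established.
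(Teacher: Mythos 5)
Your proposal is correct and follows essentially the same route as the paper: the paper also computes $[\Sigma\RP^2,S^2]$ from the (once-suspended) Barratt--Puppe sequence of the degree-two map $\p_1:S^1\to S^1$ (the attaching map of the top cell of $\RP^2$, whose cofiber sequence is exactly the one you write down), uses that precomposition with a degree-two self-map of a sphere is multiplication by $2$ on $\pi_2(S^2)$ and $\pi_3(S^2)$ to get $\Z_2$ by exactness, and then kills $\hat{\p}_2^1$ as a homomorphism from $\Z_2$ into $\Z$. Your extra remark that $\hat{\p}_2^1$ is a genuine homomorphism because $\p_2^1$ is a suspension is a point the paper leaves implicit.
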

\begin{proof}
By considering the obvious homeomorphisms $\RP^2/S^1\cong S^2$ and $\Sigma^kS^1\cong S^{k+1}$, the Barratt-Puppe sequence associated to the double covering map $\p_1:S^1\to S^1$ is $$S^1\stackrel{\p_1}{\longrightarrow}S^1\stackrel{\ell_1}{\longrightarrow}\RP^2\stackrel{\q_2}{\longrightarrow}S^2\stackrel{\p_1^1}{\longrightarrow} S^2\stackrel{\ell_1^1}{\longrightarrow}\Sigma\RP^2\stackrel{\q_2^1}{\longrightarrow} S^3\stackrel{\p_1^2}{\longrightarrow} S^3\to$$

Here, the superscript $k$ in a map $f$ indicate the $k^{\rm th}$ suspension of $f$. Obviously, each $\p_1^k$, as well as $\p_1$, has degree $2$. Consider the corresponding exact sequence $$\to[S^3,S^2]\stackrel{\hat{\p}_1^2}{\longrightarrow}[S^3,S^2]\stackrel{\hat{\q}_2^1}{\longrightarrow}[\Sigma\RP^2,S^2]\stackrel{\hat{\ell}_1^1}{\longrightarrow}[S^2,S^2]\stackrel{\hat{\p}_1^1}{\longrightarrow}[S^2,S^2]\to$$

For $k=2,3$ we have $[S^k,S^2]=\pi_k(S^2)\approx\Z$ and the homomorphism $\hat{\p}_1^{k-1}$ corresponds to the multiplication by $2$. By exactness, $\hat{\ell}_1^1=0$ and $\hat{\q}_2^1$ is surjective. Hence $$[\Sigma\RP^2,S^2]={\im}(\hat{\q}_2^1)\approx\pi_3(S^2)/{\im}(\hat{p}_1^2)\approx\Z_2.$$

Thus, in the exact sequence (\ref{Equation-Exact-Sequence-From-Puppe}), $\hat{\p}_2^{1}$ corresponds to a homomorphism from $\Z_2$ into $\Z$, which forces it to be null.
\end{proof}

Next we prove Theorem \ref{Theorem-Homotopy-Class}.

\begin{proof}[Proof of Theorem \ref{Theorem-Homotopy-Class}]
As we have remarked after the statement of Theorem \ref{Theorem-Homotopy-Class}, it remains to prove the equality $[\RP^3,S^2]=\{[h'_n] : n\in\Z\}$.

By Lemmas \ref{Lemma-p2} and \ref{Lemma-p21}, the exact sequence (\ref{Equation-Exact-Sequence-From-Puppe}) gives rise to the short exact sequence of sets
\begin{equation*}\label{Equation-Short-Exact-Sequence}
0\to\pi_3(S^2)\stackrel{\hat{\q}_3}{\longrightarrow}[\RP^3,S^2]\stackrel{\hat{\ell}_2}{\longrightarrow}\{0,[\q_2]\}\to0
\end{equation*}

It follows that $\hat{\ell}_2$ is surjective and so \begin{equation}\label{Equation-Homotopy-Class}
    [\RP^3,S^2]=\hat{\ell}_2^{-1}(0)\sqcup\hat{\ell}_2^{-1}([\q_2])={\im}(\hat{\q}_3)\sqcup\hat{\ell}_2^{-1}([\q_2]).
\end{equation}

Since $\pi_3(S^2)\approx\Z$ is generated by $[h]$, we have \begin{equation}\label{Equation-im-q3}
    {\im }(\hat{\q}_3)=\big\{[nh\circ\q_3] : n\in\Z\big\}=\{[h'_{2n}] : n\in\Z\},
\end{equation}

Next we prove the equality \begin{equation}\label{Equation-pre-l2}
    \hat{\ell}_2^{-1}([\q_2])=\{[h'_{2n+1}] : n\in\Z\}.
\end{equation}

Firstly we observe that, from Proposition \ref{Proposition-Factorization-Hopf-Map} and Equations (\ref{Equation-Homotopy-Class}) and (\ref{Equation-im-q3}), we have the inclusion $\{[h'_{2n+1}] : n\in\Z\}\subseteq \hat{\ell}_2^{-1}([\q_2])$. In particular, $[h']\in\hat{\ell}_2^{-1}([\q_2])$.

Consider the natural action of the group $\pi_3(S^2)$ on $[\RP^3,S^2]$. By \cite[Proposition 2.48, p.\,30]{SW}, this action has the following property: for $[f],[g]\in[\RP^3,S^2]$ we have $\hat{\ell}_2([f])=\hat{\ell}_2([g])$ if and only if there exists $[\sigma]\in\pi_3(S^2)$ such that $[\sigma]\!\cdot\![f]=[g]$.

It follows that the orbit $\pi_3(S^2)[f]$ of each element $[f]\in\hat{\ell}_2^{-1}([\q_2])$ is whole the set $\hat{\ell}_2^{-1}([\q_2])$. In particular, we have $\hat{\ell}_2^{-1}([\q_2])=\pi_3(S^2)[h']$.

In order to find this orbit, we observe that, for each element $n[h]\in\pi_3(S^2)$, we have $n[h]\!\cdot\![h']=[\gamma_n]$, in which $\gamma_n$ is the composed map $$\gamma_n:\RP^3\stackrel{\omega}{\longrightarrow}S^3\vee\RP^3\stackrel{nh\vee h'}{\longrightarrow} S^2\vee S^2\stackrel{\nabla}{\longrightarrow} S^2,$$  where $\omega$ is the map which collapses the boundary of a $3$-disc embedded in the interior of the $3$-cell of $\RP^3$ and $\nabla$ is the folding map given by $\nabla(x,x_0)=x=\nabla(x_0,x)$.

It is not hard to see that $[\gamma_n\circ\p_3]=(2n+1)h$ and so $[\gamma_n]$ is the unique element in the orbit $\pi_3(S^2)[h']$ satisfying $\hat{\p}_3([\gamma_n])=(2n+1)h$. On the other hand, we have $[h'_{2n+1}]\in\hat{\ell}_2^{-1}([\q_2])=\pi_3(S^2)[h']$ and, by Proposition \ref{Proposition-Factorization-Hopf-Map}, $\hat{\p}_3([h'_{2n+1}])=(2n+1)h$. It follows that $[\gamma_n]=[h'_{2n+1}]$.

This proves that $\pi_3(S^2)[h']=\{[h'_{2n+1}] : n\in\Z\}$ and Equation (\ref{Equation-pre-l2}) follows. 

Finally, the equality $[\RP^3,S^2]=\{[h'_n] : n\in\Z\}$ follows from Equations (\ref{Equation-Homotopy-Class}), (\ref{Equation-im-q3}) and (\ref{Equation-pre-l2}).
\end{proof}

The following result is a useful consequence of Proposition \ref{Proposition-Factorization-Hopf-Map} and Theorem \ref{Theorem-Homotopy-Class}.

\begin{corollary}\label{Corollary-Fibration}
Let $f:\RP^3\to S^2$ be a map which is not null-homotopic. Then, there exists either a map $g_k:\RP^3\to\RP^3$ of degree $2k+1$ or a map $g_k:\RP^3\to S^3$ of degree $k$, such that $f=h'\circ g_k$ or $f=h\circ g_k$, respectively.
\end{corollary}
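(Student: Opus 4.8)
The plan is to read off the required factorizations directly from the constructions in the proof of Proposition \ref{Proposition-Factorization-Hopf-Map}, after using Theorem \ref{Theorem-Homotopy-Class} to pin down the homotopy class of $f$. Since $f$ is not null-homotopic, Theorem \ref{Theorem-Homotopy-Class} supplies an integer $n\neq0$ with $[f]=[h'_n]$; replacing $f$ by a homotopic representative I may assume $f=h'_n$, and then I split according to the parity of $n$.

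For $n$ odd, write $n=2k+1$. The odd case of Proposition \ref{Proposition-Factorization-Hopf-Map} gives $h'_n=h'\circ\A'_n$, so setting $g_k=\A'_n:\RP^3\to\RP^3$ yields $f=h'\circ g_k$. It remains to check $\deg(g_k)=n$. For this I would invoke the relation $\p_3\circ\A_n=\A'_n\circ\p_3$ recorded in that proof and pass to top homology. Since the proof of Proposition \ref{Proposition-Factorization-Hopf-Map} already establishes $\deg(\q_3)=1$ and $\deg(\q_3\circ\p_3)=2$, multiplicativity of degree forces $\deg(\p_3)=2$; combined with $\deg(\A_n)=n$, the identity $\deg(\p_3)\deg(\A_n)=\deg(\A'_n)\deg(\p_3)$ gives $2n=2\deg(g_k)$, hence $\deg(g_k)=n=2k+1$, as claimed.

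For $n$ even, write $n=2k$. The even case of Proposition \ref{Proposition-Factorization-Hopf-Map} gives $h'_n=kh\circ\q_3$. Factoring $kh$ as $h\circ\sigma$ through a self-map $\sigma:S^3\to S^3$ of degree $k$ (as permitted by the description of the maps $nh$), and setting $g_k=\sigma\circ\q_3:\RP^3\to S^3$, I obtain $f=h\circ g_k$. Using $\deg(\q_3)=1$ and multiplicativity of degree, $\deg(g_k)=k\cdot1=k$, which completes this case.

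Everything here is a direct transcription of factorizations already produced, so the only genuinely non-formal point is the degree bookkeeping in the odd case, and specifically the value $\deg(\p_3)=2$; this is where I would be most careful, since it encodes the orientability of $\RP^3$ and the compatibility of orientations, though it is conveniently forced by the degree computations already carried out in Proposition \ref{Proposition-Factorization-Hopf-Map}.
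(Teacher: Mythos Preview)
Your argument has a genuine gap at the very first step: the corollary asserts a \emph{strict equality} $f=h'\circ g_k$ or $f=h\circ g_k$ for the given map $f$, not merely for some representative of its homotopy class. When you write ``replacing $f$ by a homotopic representative I may assume $f=h'_n$'', you are changing the map and thereby proving only the weaker statement $f\simeq h'\circ g_k$ or $f\simeq h\circ g_k$. That weaker statement is not what is needed downstream: in the proof of Theorem~\ref{Theorem-H1neq0} the equality is used to identify $f^{-1}(y_0)$ literally with $g^{-1}(S_1)$, and a mere homotopy would not suffice.

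The paper closes exactly this gap. One first obtains, from Proposition~\ref{Proposition-Factorization-Hopf-Map} and Theorem~\ref{Theorem-Homotopy-Class}, a map $g_k'$ with $f\simeq h'\circ g_k'$ (odd case) or $f\simeq h\circ g_k'$ (even case). Then one invokes the homotopy lifting property of the fibrations $h:S^3\to S^2$ and $h':\RP^3\to S^2$: lifting the homotopy from $h'\circ g_k'$ (resp.\ $h\circ g_k'$) to $f$ through the fibration produces $g_k\simeq g_k'$ with $f=h'\circ g_k$ (resp.\ $f=h\circ g_k$) on the nose, and the degree is preserved under homotopy. Your degree bookkeeping in both parity cases is fine and can be kept; what is missing is this final lifting step to upgrade homotopy to equality.
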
 
\begin{proof} By Proposition \ref{Proposition-Factorization-Hopf-Map} and Theorem \ref{Theorem-Homotopy-Class}, there exist a map $g_k'$ as claimed in the Corollary where instead of the equality $f=h'\circ g_k'$ or $f=h\circ g_k'$ we have $f\simeq h'\circ g_k'$ or $f\simeq h\circ g_k'$, respectively. Using the lifting property of the fibrations $h':\mathbb{R}P^3\to S^2$ 
and $h: S^3\to S^2$, we can replace $g_k'$ by a map $g_k$ for which the equality holds.  
\end{proof} 


\section{Minimizing roots of maps into $S^2$}\label{Section-Roots}

\hspace{4mm} The Hopf fibration $h:S^3\to S^2$ has the property that $h^{-1}(y)$ is a circle for each $y\in S^2$. In particular, there exists a point $y_0\in S^2$ such that $h^{-1}(y_0)$ is the circle $$S_1=\{(z_1,0)\in\C^2 : |z_1|=1\}\subset S^3.$$

In fact, the orbit $S^1(1,0)$ of the point $(1,0)\in S^3$, under the action $S^1\times S^3\to S^3$ given by $(\lambda,(z_1,z_2))\mapsto(\lambda z_1,\lambda z_2)$, is the circle $S_1$, which implies that, for the point $y_0=h(1,0)\in S^2$, one has $h^{-1}(y_0)=S_1$.

\begin{theorem}\label{Theorem-Hopf-Minimum}
For each $n\neq0$, there exists a map $\varphi_n\in n[h]$ such that $\varphi_n^{-1}(y_0)=S_1$.
\end{theorem}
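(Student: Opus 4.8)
The plan is to realize the class $n[h]$ by precomposing the Hopf map with a carefully chosen degree-$n$ self-map of $S^3$ that controls the preimage of the circle $S_1$. Since any self-map $\psi_n:S^3\to S^3$ of degree $n$ yields a representative $\varphi_n=h\circ\psi_n\in n[h]$, and since $h^{-1}(y_0)=S_1$, we obtain $\varphi_n^{-1}(y_0)=\psi_n^{-1}(h^{-1}(y_0))=\psi_n^{-1}(S_1)$. Thus the theorem reduces to constructing a self-map $\psi_n$ of degree $n$ with the property that $\psi_n^{-1}(S_1)=S_1$, after which the conclusion is immediate.

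For $n>0$ I would reuse the formula employed for $\A_k$ in Proposition \ref{Proposition-Factorization-Hopf-Map}, namely $\psi_n(z_1,z_2)=(z_1^n,z_2)/|(z_1^n,z_2)|$; for $n<0$ I would instead take $\psi_n(z_1,z_2)=(\overline{z_1}^{|n|},z_2)/|(\overline{z_1}^{|n|},z_2)|$. In both cases the denominator never vanishes on $S^3$ (if $z_1=0$ then $z_2\neq0$), so $\psi_n$ is well defined and continuous. The decisive observation is that the second coordinate of $\psi_n(z_1,z_2)$ is a \emph{positive} multiple of $z_2$, hence $\psi_n(z_1,z_2)\in S_1$ if and only if $z_2=0$, that is, if and only if $(z_1,z_2)\in S_1$. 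This yields exactly $\psi_n^{-1}(S_1)=S_1$, and restricting to $S_1$ shows that $\psi_n$ there acts as $z_1\mapsto z_1^{\pm|n|}$.

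It then remains to verify that $\deg\psi_n=n$. I would argue by a regular-value count: for a target $(w_1,w_2)$ with $w_1,w_2\neq0$, solving $\psi_n(z_1,z_2)=(w_1,w_2)$ forces $z_1^n$ (respectively $\overline{z_1}^{|n|}$) and $z_2$ to be fixed positive multiples of $w_1$ and $w_2$; writing $r=|z_1|^2$, the modulus is pinned down by a single monotone equation with a unique root in $(0,1)$, while the phase of $z_1$ admits exactly $|n|$ values. Each preimage point contributes local degree $+1$ in the holomorphic case ($n>0$) and $-1$ in the anti-holomorphic case ($n<0$), for a total of $n$. This is essentially the computation already invoked for $\deg\A_k=k$.

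The \textbf{main obstacle} is precisely this degree verification together with the sign bookkeeping for $n<0$: one must confirm that the radial normalization does not alter the local orientation data at each preimage point and that the conjugation correctly reverses the sign for negative indices. Once $\deg\psi_n=n$ and $\psi_n^{-1}(S_1)=S_1$ are established, setting $\varphi_n=h\circ\psi_n$ completes the proof, since $\varphi_n\in n[h]$ and $\varphi_n^{-1}(y_0)=\psi_n^{-1}(S_1)=S_1$.
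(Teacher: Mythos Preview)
Your proposal is correct and follows essentially the same approach as the paper: precompose $h$ with the self-map $\A_n(z_1,z_2)=(z_1^{n},z_2)/|(z_1^{n},z_2)|$ and use that $\A_n^{-1}(S_1)=S_1$ together with $\deg\A_n=n$. You are in fact more careful than the paper in two respects---handling $n<0$ via conjugation (so the formula is defined at $z_1=0$) and sketching the degree computation---whereas the paper simply asserts $\deg\A_n=n$ without further comment.
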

\begin{proof}
Consider the map $\A_n:S^3\to S^3$ defined by $\A_n(z_1,z_2)=(z_1^n,z_2)/|(z_1^n,z_2)|$, as defined in the proof of Proposition \ref{Proposition-Factorization-Hopf-Map}. Then $\deg(\A_n)=n$ and $\A_n^{-1}(S_1)=S_1$. Define $\varphi_n=h\circ\A_n$. Then $\varphi_n\in n[h]$ and $$\varphi_n^{-1}(y_0)=\A_n^{-1}(h^{-1}(y_0))=\A_n^{-1}(S_1)=S_1.$$

\vspace{-7mm}
\end{proof}

Since $\pi_3(S^2)=\{n[h] : n\in\Z\}$, Theorem \ref{Theorem-Hopf-Minimum} shows that each non null-homotopic map $f:S^3\to S^2$ is homotopic to a map $\varphi$ such that $\varphi^{-1}(y_0)=S_1$. On the other hand, a null-homotopic map $f_0:S^3\to S^2$ (i.e. $f_0\in0[h]$) is homotopic to a map $\varphi_0$ such that $\varphi_0^{-1}(y_0)=\varnothing$ (just take $\varphi_0$ to be the constant map at a point $y_1\neq y_0$).

\vspace{2mm}
The following result is the analogous of Theorem \ref{Theorem-Hopf-Minimum} for maps from $\RP^3$ into $S^2$.

\begin{theorem}\label{Theorem-HopfLine-Minimum}
For each $n\neq0$, there exists a map $\varphi_n'\in[h_n']$ such that $\varphi_n'^{-1}(y_0)$ is a circle.
\end{theorem}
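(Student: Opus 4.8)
The plan is to reduce the statement for $\RP^3$ to the already-solved sphere case via the explicit factorizations constructed in Proposition \ref{Proposition-Factorization-Hopf-Map}, treating the odd and even indices separately, exactly as in that proof. For a non null-homotopic map, by Theorem \ref{Theorem-Homotopy-Class} we know $[f]=[h'_n]$ for some $n\neq 0$, so it suffices to produce, for each $n\neq 0$, a map homotopic to $h'_n$ whose root set over $y_0$ is a circle.

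For the even case $n=2m$ with $m\neq 0$, recall that $h'_{2m}=mh\circ\q_3$. The idea is to precompose the minimizing map $\varphi_m=h\circ\A_m$ from Theorem \ref{Theorem-Hopf-Minimum} (which satisfies $\varphi_m^{-1}(y_0)=S_1$) with the quotient map $\q_3:\RP^3\to S^3$ and to understand the preimage $\q_3^{-1}(S_1)$. First I would arrange, by choosing an appropriate model of $\q_3$ that collapses the $2$-skeleton $\RP^2$ to the basepoint $s_0$, that $s_0\notin S_1$; then $\q_3$ restricted to the complement of $\RP^2$ is a homeomorphism onto $S^3\setminus\{s_0\}$, so $\q_3^{-1}(S_1)$ is a single circle sitting inside the top cell of $\RP^3$. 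Setting $\varphi'_{2m}=\varphi_m\circ\q_3$ then gives $(\varphi'_{2m})^{-1}(y_0)=\q_3^{-1}(S_1)$, a circle, and $\varphi'_{2m}\in[h'_{2m}]$ by the homotopy $\varphi_m\simeq mh$.

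For the odd case $n=2k+1$, recall $h'_n=h'\circ\A'_n$ where $h'\circ\p_3=h$ and $\A'_n([z_1,z_2])=[\A_n(z_1,z_2)]$. Here I would exploit that $h'$ itself is already a nice fibration: since $h'\circ\p_3=h$ and $h^{-1}(y_0)=S_1=S^1(1,0)$, and since the $S^1$-orbit $S_1$ is invariant under the antipodal map (the antipode of $(z_1,0)$ is $(-z_1,0)$, still in $S_1$), the image $\p_3(S_1)$ is a single circle in $\RP^3$ and equals $(h')^{-1}(y_0)$. Then, as in Theorem \ref{Theorem-Hopf-Minimum}, $\A'_n$ maps this circle to itself (because $\A_n$ preserves $S_1$ and commutes with $\p_3$), so $(\A'_n)^{-1}((h')^{-1}(y_0))$ is again a circle, and $\varphi'_n=h'\circ\A'_n=h'_n$ realizes the circle as its root set.

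The main obstacle I anticipate is the even case: one must verify that $\q_3^{-1}(S_1)$ is genuinely a \emph{single} circle and not something more complicated. This hinges on choosing the representative of $\q_3$ so that the collapsed $2$-skeleton misses $S_1$ and $\q_3$ is a homeomorphism away from it; a careful description of $\q_3$ on the top cell, together with the observation that $S_1$ lies in the interior of $S^3\setminus\{s_0\}$, should settle this. The odd case is comparatively routine once one checks the antipodal-invariance of $S_1$, which makes $\p_3|_{S_1}$ either a homeomorphism or a double cover onto its image; since the antipodal map acts freely on $S_1$ by a half-turn rotation, $\p_3(S_1)$ is a circle double-covered by $S_1$, and this is precisely the circle realized as the minimal root set.
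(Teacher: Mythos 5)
Your proposal is correct and follows essentially the same route as the paper: in the even case you precompose $\varphi_m=h\circ\A_m$ with $\q_3$ (arranging the collapsed point to miss $S_1$ so that $\q_3^{-1}(S_1)$ is a single circle), and in the odd case you use $h'^{-1}(y_0)=\p_3(S_1)$ together with the invariance of $S_1$ under $\A_n$. Your extra check that $S_1$ is antipodally invariant, so that $\p_3(S_1)$ is genuinely a circle, is a point the paper passes over with ``obviously.''
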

\begin{proof}
We recall that the maps $h'_n:\RP^3\to S^2$ are defined in different ways according to the parity of $n$. So, we approach separately the cases $n$ even and $n$ odd.

We consider first the even case. For each $n\neq0$, the map $h_{2n}'$ is defined by the composition $h_{2n}'=nh\circ\q_3$. Then, by defining $\varphi_{2n}'=\varphi_n\circ\q_3$, we have  $\varphi_{2n}'\simeq h_{2n}'$ and  $$\varphi_{2n}'^{-1}(y_0)=\q_3^{-1}(\varphi_n^{-1}(y_0))=\q_3^{-1}(S_1).$$

Since $\q_3:\RP^3\to S^3$ is the map which collapses the $2$-skeleton $\RP^2\subset\RP^3$ to a point, which may be chosen to be a point not belonging to $S_1$, it follows that $\q_3^{-1}(S_1)$ is a circle.

For the odd case, we start by analysing the map $h'=h_1':\RP^3\to S^2$. Since $h'\circ\p_3=h$, we have $$S_1=h^{-1}(y_0)=\p_3^{-1}(h'^{-1}(y_0)) \quad{\rm and \ so}\quad h'^{-1}(y_0)=\p_3(S_1).$$

Obviously, $\p_3(S^1)$ is a circle in $\RP^3$ and so we have proved the result for $n=1$.

For an arbitrary odd integer $n$, the map $h_n':\RP^3\to S^2$ is defined by $h'_n=h'\circ\A'_n$, where, as in Section \ref{Section-Hopf-Map}, the map $\A'_n$ for $n$ 
odd is the self-map of $\RP^3$ induced on the quotient by the map $\A_n:S^3 \to S^3$, that is, $\A'_n\circ\p_3=\p_3\circ\A_n$. Hence,  $$h_n'^{-1}(y_0)=\A_{n}'^{-1}(h'^{-1})(y_0)=\A_n'^{-1}(\p_3(S_1))=\p_3(S_1).$$

\vspace{-9mm}
\end{proof}

Since $[\RP^3,S^2]=\{[h_n'] : n\in\Z\}$, Theorem \ref{Theorem-HopfLine-Minimum} implies that each non null-homotopic map $f':\RP^3\to S^2$ is homotopic to a 
map $\varphi'$ such that $\varphi'^{-1}(y_0)$ is a circle. On the other hand, a null-homotopic map $f_0':\RP^3\to S^2$ is homotopic to a map $\varphi_0'$ such that $\varphi_0'^{-1}(y_0)=\varnothing$.

\vspace{2mm}
In order to conclude this section, we show that for any non null-homotopic map $f$ from either $S^3$ or $\RP^3$ into $S^2$, we have ${\rm rank}(\check{H}^1(f^{-1}(y_0)))\geq1$, which means that, in a sense, the maps $\varphi_n$ and $\varphi_n'$ given in Theorems \ref{Theorem-Hopf-Minimum} and \ref{Theorem-HopfLine-Minimum}, respectively, realize the minimal root sets in its homotopy classes. We clarify this after the theorem.

\begin{theorem}\label{Theorem-H1neq0}
Let $f$ be a map from either $S^3$ or $\RP^3$ into $S^2$. If $f$ is not null-homotopic, then ${\rm rank}(\check{H}^1(f^{-1}(y_0)))\geq1$.
\end{theorem}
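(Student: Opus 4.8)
The plan is to detect the root set cohomologically, via Alexander duality together with the Hopf invariant, working with rational coefficients throughout so that ``rank $\geq1$'' becomes ``$\check{H}^1(\cdot;\Q)\neq0$''. I begin with the case $X=S^3$. Put $R=f^{-1}(y_0)$, a compact subset of $S^3$. By Alexander duality, $\check{H}^1(R;\Q)\cong\tilde{H}_1(S^3\setminus R;\Q)$, so it suffices to show $H^1(S^3\setminus R;\Q)\neq0$. Since $H^1(S^3;\Q)=H^2(S^3;\Q)=0$, the connecting homomorphism of the pair $(S^3,S^3\setminus R)$ is an isomorphism $H^1(S^3\setminus R;\Q)\cong H^2(S^3,S^3\setminus R;\Q)$, and everything reduces to producing a nonzero class in the latter group. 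The natural candidate is $f^{\ast}\tilde{u}$, where $\tilde{u}$ generates $H^2(S^2,S^2\setminus\{y_0\};\Q)\cong\Q$ (the relative orientation class at $y_0$), pulled back along the map of pairs $f\colon(S^3,S^3\setminus R)\to(S^2,S^2\setminus\{y_0\})$.

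The crux, and what I expect to be the main obstacle, is to show $f^{\ast}\tilde{u}\neq0$; this is exactly where the hypothesis that $f$ is not null-homotopic enters. Under the identification $\pi_3(S^2)\approx\Z$, non-null-homotopy is equivalent to the Hopf invariant $H(f)$ being nonzero, and $H(f)$ is a homotopy invariant. I would connect $H(f)$ to $f^{\ast}\tilde{u}$ through the functional cup product (equivalently, the geometric linking description of the Hopf invariant), whose nonvanishing forces $f^{\ast}\tilde{u}\neq0$. The delicacy is that $f$ is merely continuous and $y_0$ need not be a regular value, so the classical smooth/linking picture is unavailable; moreover $\check{H}^1(R)$ is \emph{not} a homotopy invariant, so $f$ cannot be replaced by a convenient representative. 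I would resolve this by exploiting the continuity of \v{C}ech cohomology: writing $R=\bigcap_D f^{-1}(D)$ over closed disks $D\searrow\{y_0\}$, one has $\check{H}^1(R;\Q)=\varinjlim_D\check{H}^1(f^{-1}(D);\Q)$, and $f^{\ast}\tilde{u}$ is represented coherently on the pairs $\bigl(S^3,S^3\setminus f^{-1}(D)\bigr)$. Thus the (homotopy-invariant) Hopf invariant can be localized to arbitrarily small neighborhoods of $R$, yielding a persistent nonzero class in the limit; making this localization precise is where the real work lies.

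For the case $X=\RP^3$, let $F=f\circ\p_3\colon S^3\to S^2$. Since $\hat{\p}_3$ is a bijection (Theorem \ref{Theorem-Homotopy-Class}) carrying non-null classes to non-null classes, $F$ is not null-homotopic, so the $S^3$ case applies to $\widetilde{R}:=F^{-1}(y_0)=\p_3^{-1}(R)$, furnishing a nonzero class in $\check{H}^1(\widetilde{R};\Q)$ arising from $F^{\ast}\tilde{u}$. Now $\p_3|_{\widetilde{R}}\colon\widetilde{R}\to R$ is a double cover with free deck involution $\sigma$ (the antipodal map), and over $\Q$ the transfer identifies $\check{H}^1(R;\Q)\cong\check{H}^1(\widetilde{R};\Q)^{\sigma}$. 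Because the candidate class is natural in $F$ and $F\circ\sigma=F$, it is automatically $\sigma$-invariant, hence descends to a nonzero class in $\check{H}^1(R;\Q)$, giving rank $\geq1$ as required. The reduction along the covering thus goes in the correct direction precisely because the detecting class originates from $S^2$ through the $\sigma$-equivariant map $F$.
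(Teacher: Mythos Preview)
Your approach is genuinely different from the paper's, and the overall architecture---Alexander duality to convert $\check H^1(R;\Q)\neq0$ into $H^2(S^3,S^3\setminus R;\Q)\neq0$, then exhibiting $f^\ast\tilde u$ as a candidate nonzero class, and finally descending to $\RP^3$ via the transfer along the free involution---is sound. The $\RP^3$ reduction in particular is clean and correct as stated, given the $S^3$ case.

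The gap is exactly where you flag it: the implication ``$H(f)\neq0\Rightarrow f^\ast\tilde u\neq0$ in $H^2(S^3,S^3\setminus R;\Q)$'' is asserted, not proved. The linking/functional--cup--product heuristic uses \emph{two} fibres, and even in the de~Rham picture (choose $\omega$ supported near $y_0$, write $f^\ast\omega=d\eta$, so that $\alpha=[\eta|_U]=\delta^{-1}(f^\ast\tilde u)$), the Hopf integral $\int_{S^3}\eta\wedge d\eta$ lives on a neighbourhood of $R$ where $\eta$ is \emph{not} controlled by $\alpha$; so $\alpha=0$ does not obviously force $H(f)=0$. Your continuity--of--$\check H$ localization is the right instinct, but carrying it out is essentially tantamount to proving a transfer/umkehr statement for preimages under a map detected by a secondary operation, which is nontrivial.

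The paper avoids this entirely by a different manoeuvre: using Corollary~\ref{Corollary-Fibration} (lifting through the Hopf fibration) to write $f=h\circ g$ (or $f=h'\circ g$) with $\deg g\neq0$, so that $f^{-1}(y_0)=g^{-1}(S_1)$ is the preimage of a \emph{circle} under a map of \emph{nonzero degree} between closed oriented $3$--manifolds. Then Dold's Proposition~VIII.10.2 supplies a factorization
\[
\check H^1(S_1)\;\longrightarrow\;\check H^1\!\bigl(g^{-1}(S_1)\bigr)\;\longrightarrow\;\check H^1_c\!\bigl(g^{-1}(S_1)\bigr)\;\longrightarrow\;\check H^1(S_1)
\]
whose composite is multiplication by $\deg g\neq0$, forcing an infinite cyclic subgroup in $\check H^1(f^{-1}(y_0))$. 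The trade--off: the paper's route needs the earlier structural input (the factorization through $h$ or $h'$) and a black--box transfer result, but then the cohomological step is a one--liner; your route is self--contained and conceptually attractive, but the ``real work'' you identify is genuinely substantial and not yet done.
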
 
\begin{proof} 
The proof for a map from $S^3$ into $S^2$ is similar and simpler than the proof for a map from $\RP^3$ into $S^2$. So, we present just the proof for this second setting.

Let $f:\RP^3\to S^2$ be a map which is not null-homotopic. From Corollary \ref{Corollary-Fibration} we have either $f=h\circ g$ or $f=h'\circ g$ for a map $g:\RP^3\to S^3$ or $g:\RP^3\to\RP^3$, respectively, of non zero degree. The proof is similar in both cases; we detail just the first of them and left the second for the reader.

Consider $f=h\circ g$, so that $f^{-1}(y_0)=g^{-1}(h^{-1}(y_0))=g^{-1}(S_1)$. We apply Proposition 10.2 of \cite[Chapter VIII, p.\,309]{Do} for the case in which $M'=\RP^3$, $N=S^3$, $K=S_1$, $L=\varnothing$ and the given map is $g$. For that, we must show that the hypotheses of the Proposition are satisfied. First we observe that the restriction of the Hopf fibration $h:S^3\to S^2$ over $S^2\backslash\{y_0\}$ is again a locally trivial fibration $S^1 \to S^3 \backslash S_1\to S^2\backslash\{y_0\}$ over an open $2$-disk, so the total space $S^3 \backslash S_1$ has the homotopy type of $S^1$ (in fact it is homeomorphic to the product of the open $2$-disk times $S^1$). Thus $H_i(S^3\backslash S_1)=0$ for $i=2,3$. Let us consider  the induced homomorphisms between the long exact sequences in homology  of the pairs $(\RP^3,\RP^3\backslash g^{-1}(S_1))$ and $(S^3,S^3\backslash S_1)$, namely,

\vspace{-4mm} {\small 
$$\xymatrix{H_3(\RP^3\backslash g^{-1}(S_1)) \ar[r] \ar[d]^-{g|_{\ast}} &  H_3(\RP^3) \ar[r] \ar[d]^-{g_{\ast}} & H_3(\RP^3,\RP^3\backslash g^{-1}(S_1)) \ar[r] \ar[d]^-{(g,g|)_{\ast}} &  H_2(\RP^3\backslash g^{-1}(S_1)) \ar[d]^-{g|_{\ast}} \\ 0=H_3(S^3\backslash S_1) \ar[r] & H_3(S^3) \ar[r]^-{\approx} & H_3(S^3,S^3\backslash S_1) \ar[r] & H_2(S^3\backslash S_1)=0. }$$}

\vspace{-2mm}\noindent Since $g$ is not null-homotopic, we have that $g_{*}$ is multiplication by $r\ne 0$. So the fundamental class $o_{S_1}\in H_3(S^3,S^3\backslash S_1)$ around the compact subset $S_1\subset S^3$ has the property that $r\!\cdot\!o_{S_1}$ belongs to the image 
of ${(g,g|)_{\ast}}$. This implies that the remains hypothesis of Proposition 10.2 of \cite[Chapter VIII, p.\,309]{Do} are satisfied. From that Proposition, there exists a sequence of homomorphisms $$\check{H}^1(S_1)\to\check{H}^1(g^{-1}(S_1))\to\check{H}^1_c(g^{-1}(S_1))\to\check{H}^1(S_1)$$ whose composite equal to $r$-times the identity homomorphism. Since $\check{H}^1(S_1)\approx\Z$, it follows that $\check{H}^1(g^{-1}(S_1))$ contains a infinite cyclic subgroup. The result follows.
\end{proof}

In view of Theorem \ref{Theorem-H1neq0}, we can state that, for a non null-homotopic map $f$ from either $S^3$ or $\RP^3$ into $S^2$, one can not expect to have the root set $f^{-1}(y_0)$ properly contained in a circle. Therefore, it is reasonable to state that, in this situation, the minimal set for the root problem must be a circle.

In fact, according Definition \ref{Definition-Minimal}, Theorems \ref{Theorem-Hopf-Minimum}, \ref{Theorem-HopfLine-Minimum} and \ref{Theorem-H1neq0} together imply that, in both cases, $X=S^3$ and $X=\RP^3$, the circle is the minimal root set for any non-trivial homotopy class $\alpha\in[X,S^2]$.


At this point we propose a comparison with the result presented in \cite[Example 1.3]{KP}, for $n=2$ and $n'=1$. It follows from that result that for a non null-homotopic map $f:S^3\to S^2$, the minimal root set is path connected and infinite. Our conclusion goes further, proving that the minimal root set is a circle.


\section{Minimizing roots of maps into $\RP^2$}\label{Section-Maps-Into-RP2}

\hspace{5mm} In this section, we solve the root problem for maps from $S^3$ and $\RP^3$ into $\RP^2$. Precisely, we prove a minimum theorem for roots. 

The first step is describing the sets $[S^3,\RP^2]$ and $[\RP^3,\RP^2]$. As before, $\p_2:S^2\to \RP^2$ is the double covering map.

\begin{lemma}\label{Lemma-Lifting}
The function $[f]\mapsto[\p_2\circ f]$ provides bijections from $[S^3, S^2]$ onto $[S^3,\RP^2]$ and from $[\RP^3,S^2]$ onto $[\RP^3,\RP^2]$.
\end{lemma}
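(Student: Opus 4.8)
The plan is to exploit the fact that the double covering map $\p_2:S^2\to\RP^2$ is a fibration with fibre $S^0=\{-1,+1\}$ (two points), together with the covering homotopy and lifting properties, to translate the sets $[X,\RP^2]$ into the sets $[X,S^2]$ for the simply connected domains $X=S^3$ and $X=\RP^3/\!\!\sim$ under consideration. The key algebraic input is the long exact sequence of homotopy groups of the fibration $S^0\to S^2\to\RP^2$, which gives $\p_{2\ast}:\pi_k(S^2)\to\pi_k(\RP^2)$ an isomorphism for all $k\geq2$, since the fibre $S^0$ is discrete and hence $\pi_k(S^0)=0$ for $k\geq1$. This already handles the first bijection at the level of based homotopy groups: for $X=S^3$ the set $[S^3,\RP^2]$ is $\pi_3(\RP^2)\approx\pi_3(S^2)=[S^3,S^2]$, and the isomorphism is realised precisely by post-composition with $\p_2$.

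The steps I would carry out are as follows. First, I would establish \emph{surjectivity} of the assignment $[f]\mapsto[\p_2\circ f]$ by a lifting argument: given any map $F:X\to\RP^2$, I want to lift it through $\p_2$ to a map $f:X\to S^2$ with $\p_2\circ f\simeq F$. Since $\p_2$ is a covering map, a lift exists on the universal cover or on a simply connected space; for $X=S^3$ this is immediate because $S^3$ is simply connected, so $F_\ast(\pi_1(S^3))=0$ lies in $\p_{2\ast}(\pi_1(S^2))=0$ and the lifting criterion is satisfied. For $X=\RP^3$ the lifting criterion requires $F_\ast(\pi_1(\RP^3))\subseteq\p_{2\ast}(\pi_1(S^2))=\{0\}$, i.e.\ that $F_\ast:\Z_2=\pi_1(\RP^3)\to\pi_1(\RP^2)=\Z_2$ be trivial; so the delicate point is to verify that every $F:\RP^3\to\RP^2$ induces the zero map on $\pi_1$. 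I expect this to follow from the fact that the generator of $\pi_1(\RP^3)$ comes from the $1$-skeleton and, after composing into $\RP^2$, is forced to be trivial; alternatively one can argue via the classification already obtained in Theorem \ref{Theorem-Homotopy-Class} that every homotopy class of $[\RP^3,S^2]$ produces, under $\p_2$, every class in $[\RP^3,\RP^2]$.

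Second, I would establish \emph{injectivity}: if $\p_2\circ f_0\simeq\p_2\circ f_1$ for two maps $f_0,f_1:X\to S^2$, then $f_0\simeq f_1$. Here the covering homotopy property is the right tool. Given a homotopy $H:X\times I\to\RP^2$ from $\p_2\circ f_0$ to $\p_2\circ f_1$, I lift it to a homotopy $\widetilde H:X\times I\to S^2$ starting at $f_0$; the endpoint $\widetilde H_1$ is a lift of $\p_2\circ f_1$, hence differs from $f_1$ by a deck transformation of the cover, namely the antipodal map $a:S^2\to S^2$ or the identity. Since $X$ is connected, this deck transformation is globally constant, so either $\widetilde H_1=f_1$ (and we are done) or $\widetilde H_1=a\circ f_1$. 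The remaining work is to rule out the second possibility, or rather to show it is compatible with $f_0\simeq f_1$; this is where one uses that $a$ acts on $\pi_k(S^2)$ in a controlled way (the antipodal map on $S^2$ has degree $-1$ and its effect on $\pi_3(S^2)\approx\Z$ is understood), so that passing from $f_1$ to $a\circ f_1$ does not change the homotopy class after accounting for the covering structure.

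The main obstacle I anticipate is precisely this injectivity step for the two-point fibre: because $\p_2^{-1}$ of a homotopy class can a priori split into two lifted classes related by the deck transformation, one must show that the \emph{fibre} of the map $[f]\mapsto[\p_2\circ f]$ is a single point rather than two. The cleanest way to resolve this is to combine the covering-homotopy lifting argument above with the already-computed structure of $[X,S^2]$: for $X=S^3$ the antipodal deck transformation $a$ satisfies $a\circ f\simeq f$ in $[S^3,S^2]$ since $a_\ast$ acts trivially on $\pi_3(S^2)$ (the Hopf class is sent to itself), and an analogous verification settles the $\RP^3$ case using Theorem \ref{Theorem-Homotopy-Class}. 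Once surjectivity and this sharpened injectivity are in hand, the stated bijections follow immediately.
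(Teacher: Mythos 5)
Your overall strategy (lift maps and homotopies through the covering $\p_2$, then control the two-fold ambiguity of lifts by the deck transformation) matches the paper's, and your treatment of injectivity is in fact more explicit than the paper's one-line appeal to the homotopy lifting property: the observation that the antipodal map $a$ acts on $\pi_3(S^2)$ as multiplication by $\deg(a)^2=1$, so that $a\circ f\simeq f$, correctly disposes of the possibility that the lifted homotopy ends at $a\circ f_1$ rather than at $f_1$, and reducing the $\RP^3$ case to this one via the injectivity of $\hat{\p}_3$ (Theorem \ref{Theorem-Homotopy-Class}) is sound.

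The genuine gap is in the surjectivity step for $X=\RP^3$. Everything hinges on the claim that every map $F:\RP^3\to\RP^2$ induces the trivial homomorphism $\pi_1(\RP^3)\to\pi_1(\RP^2)$, and neither of your two suggested justifications establishes it. The assertion that the image of the generator of $\pi_1(\RP^3)$ is ``forced to be trivial'' after composing into $\RP^2$ fails at the level you invoke it: a loop generating $\pi_1(\RP^2)$ extends perfectly well over the $2$-cell of $\RP^2\subset\RP^3$ (its square is trivial in $\pi_1(\RP^2)$), so nothing about low skeleta rules out a $\pi_1$-nontrivial map --- the obstruction only appears at the $3$-cell and must be detected by a genuine argument. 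Your alternative --- deducing surjectivity from the classification of $[\RP^3,S^2]$ --- is circular: knowing $[\RP^3,S^2]$ says nothing about which classes of $[\RP^3,\RP^2]$ are hit by $\p_2\circ(-)$ unless you already know that every map $\RP^3\to\RP^2$ lifts, which is exactly the point at issue. The paper closes this gap with a cup-product argument: if $F_{\#}$ were nontrivial, hence an isomorphism, then $F^{\ast}$ would send the generator $\alpha\in H^1(\RP^2;\Z_2)$ to the generator $\beta\in H^1(\RP^3;\Z_2)$, yielding the contradiction $0=F^{\ast}(\alpha^3)=\beta^3\neq0$, since $H^{\ast}(\RP^2;\Z_2)\approx\Z_2[\alpha]/(\alpha^3)$ while $\beta^3\neq0$ in $H^{\ast}(\RP^3;\Z_2)\approx\Z_2[\beta]/(\beta^4)$. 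Some argument of this kind (or an appeal to Borsuk--Ulam) is indispensable and must be supplied before the rest of your outline goes through.
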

\begin{proof} In both cases, the result follows from the {\it homotopy lifting property} for the covering map $\p_2:S^2\to\RP^2$. In the fist case, this is clear, since it is obvious that each map $f:S^3\to\RP^2$ lifts through $\p_2$. We claim that the same happens with each map $f:\RP^3\to\RP^2$, which is equivalent to say that each such a map induces the trivial homomorphism on fundamental groups. Suppose there exists a map $f:\RP^3\to\RP^2$ whose induced homomorphism $f_{\#}:\pi_1(\RP^3)\to\pi_1(\RP^2)$ is not trivial. Then $f_{\#}$ is an isomorphism and the ring homomorphism $f^{\ast}:H^{\ast}(\RP^2;\Z_2)\to H^{\ast}(\RP^3;\Z_2)$ maps the generator $\alpha\in H^1(\RP^2;\Z_2)$ to the generator $\beta\in H^1(\RP^3;\Z_2)$. However, the cohomology rings of $\RP^2$ and $\RP^3$ (with coefficients in $\Z_2$) are, respectively, $H^{\ast}(\RP^2;\Z_2)\approx\Z_2[\alpha]/(\alpha^3)$ and $H^{\ast}(\RP^3;\Z_2)\approx\Z_2[\beta]/(\beta^4)$; see \cite[Theorem 3.12, p.\,212]{Hatcher}. This provides the contradiction $0=f^{\ast}(\alpha^3)=\beta^3\neq0$.
\end{proof}

Next result is the minimum theorem for roots of maps from $S^3$ and $\RP^3$ into $\RP^2$.

In Section \ref{Section-Roots}, we fixed the point $y_0=h(1,0)\in S^2$ to be the base point. Here, we consider its projection $\bar{y}_0=\p_2(y_0)\in\RP^2$.

\begin{theorem}\label{Theorem-Min-Roots-RP2}
Let $f$ be a map from either $S^3$ or $\RP^3$ into $\RP^2$. If $f$ is not null-homotopic, then ${\rm rank}(\check{H}^1(f^{-1}(\bar{y}_0)))\geq 2$. Further, $f$ is homotopic to a map $\varphi$ such that $\varphi^{-1}(\bar{y}_0)$ is the disjoint union of two circles.  
\end{theorem}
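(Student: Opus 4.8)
The plan is to reduce the whole statement to the analysis of maps into $S^2$ carried out in Section \ref{Section-Roots}, exploiting that the double cover $\p_2\colon S^2\to\RP^2$ has fiber $\p_2^{-1}(\bar{y}_0)=\{y_0,-y_0\}$, a pair of antipodal points. The crucial elementary observation is that for any $g\colon X\to S^2$ (with $X=S^3$ or $\RP^3$) one has
$$(\p_2\circ g)^{-1}(\bar{y}_0)=g^{-1}(y_0)\sqcup g^{-1}(-y_0),$$
the disjoint union of the root sets of $g$ over the two antipodal base points. Since every Hopf fiber is a circle, both points are generic for the fibration; concretely, fixing the identification $S^2\cong\C\mathrm{P}^1$ so that $\p_2$ quotients by the standard antipodal map, one has $-y_0=h(0,1)$ and $h^{-1}(-y_0)=S_2:=\{(0,z_2)\in\C^2:|z_2|=1\}$, the antipodal companion of $S_1$. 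I would record this identification explicitly at the outset.

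For the cohomological lower bound I would first lift $f$ honestly. By the lifting criterion used in the proof of Lemma \ref{Lemma-Lifting} (the induced $f_{\#}$ on $\pi_1$ is trivial in both cases), $f$ admits a strict lift $\hat{f}\colon X\to S^2$ with $\p_2\circ\hat{f}=f$, and $\hat{f}$ is not null-homotopic because the bijection of Lemma \ref{Lemma-Lifting} carries $[\hat{f}]$ to $[f]\neq0$. Then $f^{-1}(\bar{y}_0)=\hat{f}^{-1}(y_0)\sqcup\hat{f}^{-1}(-y_0)$ is the topological disjoint union of two disjoint closed subsets of the compact, hence normal, space $X$, so $\check{H}^1$ splits as a direct sum and ranks add. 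Applying Theorem \ref{Theorem-H1neq0} to $\hat{f}$ at $y_0$, and again at $-y_0$ (the proof of that theorem uses only that the chosen point has circle fiber and that the complement of that fiber in $S^3$ is homotopy equivalent to $S^1$, which holds verbatim at $-y_0$), gives $\mathrm{rank}\,\check{H}^1(\hat{f}^{-1}(\pm y_0))\geq1$ for each sign, whence $\mathrm{rank}\,\check{H}^1(f^{-1}(\bar{y}_0))\geq2$.

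For the realization statement I would start from the minimal representatives already built over $S^2$. Writing $f\simeq\p_2\circ\psi$ with $\psi$ not null-homotopic (Lemma \ref{Lemma-Lifting}), replace $\psi$ by the map $\varphi_n$ of Theorem \ref{Theorem-Hopf-Minimum} (if $X=S^3$) or the map of Theorem \ref{Theorem-HopfLine-Minimum} (if $X=\RP^3$) in its class, and set $\varphi=\p_2\circ(\text{that map})$, which is homotopic to $f$. It then suffices to check that these maps pull back not only $y_0$ but also $-y_0$ to a circle. For $X=S^3$ this is immediate: $\varphi_n=h\circ\A_n$ and $\A_n$ fixes $S_2$ setwise (since $\A_n(0,z_2)=(0,z_2)$), so $\varphi_n^{-1}(-y_0)=\A_n^{-1}(S_2)=S_2$ and $\varphi^{-1}(\bar{y}_0)=S_1\sqcup S_2$. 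For $X=\RP^3$ the same computation propagates through $\q_3$ in the even case (choosing the collapsed point $s_0\notin S_1\cup S_2$, so that $\q_3^{-1}(S_1)$ and $\q_3^{-1}(S_2)$ are circles) and through $\A_n'$ and $\p_3$ in the odd case (using that $S_1,S_2$ are antipodally invariant, so $\p_3^{-1}(\p_3(S_i))=S_i$ and $\A_n'^{-1}(\p_3(S_i))=\p_3(S_i)$). In every case the two circles are disjoint because $y_0\neq-y_0$.

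The main obstacle is not a single hard estimate but two points demanding care: (i) justifying that Theorem \ref{Theorem-H1neq0} applies equally at the antipodal base point $-y_0$ and that $\check{H}^1$ is additive over the two separated components; and (ii) verifying concretely that the antipodal fiber $S_2$ is preserved by $\A_n$, $\A_n'$, $\q_3$ and $\p_3$, so that it too pulls back to a single circle rather than to a more complicated set. Both become routine once the identification $-y_0=h(0,1)$, $h^{-1}(-y_0)=S_2$ is pinned down.
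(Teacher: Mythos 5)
Your proposal is correct and follows essentially the same route as the paper: lift $f$ through $\p_2$, decompose $f^{-1}(\bar{y}_0)$ as $\tilde{f}^{-1}(y_0)\sqcup\tilde{f}^{-1}(-y_0)$, apply Theorem \ref{Theorem-H1neq0} at both antipodal points for the lower bound, and use the explicit representatives $h\circ\A_n$ and $\varphi_n'$ to realize two disjoint circles over $y_1=h(1,0)$ and $y_2=h(0,1)$. If anything, you are more careful than the paper on two points it leaves implicit: the identification $-y_0=h(0,1)$ (so that $\{y_1,y_2\}$ really is the fiber $\p_2^{-1}(\bar{y}_0)$) and the details of the $\RP^3$ case, which the paper dispatches with ``mutatis mutandis.''
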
 
\begin{proof} Let $f: X\to\RP^2$ be a map which is not null-homotopic, where $X$ is either $S^3$ or $\RP^3$. By the proof of Lemma \ref{Lemma-Lifting}, $f$ lifts through $\p_2$ to a map $\tilde{f}:X\to S^2$ which, obviously, is not null-homotopic. Then $f^{-1}(\bar{y}_0)$ is the disjoint union $\tilde{f}^{-1}(y_0)\cup\tilde{f}^{-1}(-y_0)$. By Theorem \ref{Theorem-H1neq0}, we have both ${\rm rank}(\check{H}^1(\tilde{f}^{-1}(y_0)))\geq1$ and ${\rm rank}(\check{H}^1(\tilde{f}^{-1}(-y_0)))\geq1$. The first statement of the theorem follows.

In order to prove the second part, it suffices to find a map $g:X\to S^2$ homotopic to $\tilde{f}$ and two points $y_1,y_2\in S^2$ such that both $g^{-1}(y_1)$ and $g^{-1}(y_2)$ are circles.

Consider the points $y_1=h(1,0)$ and $y_2=h(0,1)$ in $S^2$, so that $h^{-1}(y_1)$ and $h^{-1}(y_2)$ are, respectively, the circles $$S_1=\{(z_1,0)\in\C^2 : |z_1|=1\} \quad{\rm and}\quad S_2=\{(0,z_2)\in\C^2 : |z_2|=1\}.$$

For the rest of the proof, we consider separately the cases $X=S^3$ and $X=\RP^3$.

For $X=S^3$, the map $\tilde{f}$ is homotopic to $nh$ for some $n\neq0$. Define the map $g=h\circ\A_n:S^3\to S^2$, as in the proof of Theorem \ref{Theorem-Hopf-Minimum}. Then $g$ is homotopic to $\tilde{f}$ and, for $i=1,2$, we have  $g^{-1}(y_i)=\A_n^{-1}(h^{-1}(y_i))=\A_n^{-1}(S_i)=S_i$.

For $X=\RP^3$, it follows from Proposition \ref{Proposition-Factorization-Hopf-Map} that $\tilde{f}$ is homotopic to $h'_n$ for some $n\neq0$. Define the map $g=\varphi_n':\RP^3\to S^2$ as in the proof of Theorem \ref{Theorem-HopfLine-Minimum}. Then $g$ is homotopic to $\tilde{f}$ and, since $y_1$ is is fact the point $y_0$ fixed in Section \ref{Section-Roots} to be the base point, it follows from Theorem \ref{Theorem-HopfLine-Minimum} that $g^{-1}(y_1)$ is a circle. That also $g^{-1}(y_2)$ is a circle follows {\it mutatis mutandis} the proof of Theorem \ref{Theorem-HopfLine-Minimum}.
\end{proof}

In view of Definition \ref{Definition-Minimal}, Theorem \ref{Theorem-Min-Roots-RP2} shows that the minimal root set for a non null-homotopic map $f$ from either $S^3$ or $\RP^3$ into $\RP^2$ is the disjoint union of two circles. This find fits and improves Theorem 1.17 of \cite{K3}, in the particular case in which the domain is $S^3$, $\mathbb{K}=\R$, $m=3$, $n'=2$ and one of the maps is the constant map at $\bar{y}_0$. In fact, it follows from that theorem that for a non null-homotopic map $f:S^3\to\RP^2$, one has ${\rm MCC}(f,\bar{y}_0)=2$ and ${\rm MC}(f,\bar{y}_0)=\infty$.


\section*{Acknowledgments}

\hspace{4mm} The second author is partially sponsored by Projeto Tem\'atico FAPESP: {\it Topologia Alg\'ebrica, Geom\'etrica e Diferencial} -- 2016/24707-4 (Brazil). 

The partnership of this work began during a visit of the second author to the first and third ones at {\it Universidade Federal de Uberl\^andia}.



\vspace{2mm}


\rule{4.5cm} {1pt} \vspace{2mm}

 {\sc Marcio Colombo Fenille} ({\sl mcfenille@gmail.com})
 
 Universidade Federal de Uberl\^andia -- Faculdade de Matem\'atica.
 
 Av.\,Jo\~ao Naves de \'Avila, 2121, Santa M\^onica, 38408-100, Uberl\^andia MG, Brasil.
 
 \vspace{4mm}
 
 {\sc Daciberg Lima Gon\c calves} ({\sl dlgoncal@ime.usp.br}) 
 
 Universidade de S\~ao Paulo -- Instituto de Matem\'atica e Estat\'istica.
 
  Rua do Mat\~ao, 1010, Cidade Universit\'aria, 05508-090, S\~ao Paulo SP, Brasil.
 
 \vspace{4mm}
 
 {\sc Gustavo de Lima Prado} ({\sl glprado@ufu.br})
  
 Universidade Federal de Uberl\^andia -- Faculdade de Matem\'atica.
 
 Av.\,Jo\~ao Naves de \'Avila, 2121, Santa M\^onica, 38408-100, Uberl\^andia MG, Brasil.

\vspace{3mm} $$\star \ \star \ \star$$

\end{document}